\newcommand{\R}{\mathbb{R}}
\newcommand{\N}{\mathbb{N}}
\newcommand{\set}[1]{\left\{#1\right\}}
\newcommand{\norms}[1]{\Vert#1\Vert}
\newcommand{\iprod}[1]{\left\langle#1\right\rangle}
\newcommand{\iprods}[1]{\langle#1\rangle}
\newcommand{\dom}[1]{\mathrm{dom}\, #1}
\newcommand{\ri}{\operatorname{ri}}
\newtheorem{fact}{Fact}[section]
\theoremstyle{definition}
\newtheorem{definition}{Definition}[section]
\newtheorem{proposition}{Proposition}[section]
\theoremstyle{remark}
\newtheorem{example}{Example}[section]
\newtheorem{assumption}{Assumption}
\crefname{assumption}{Assumption}{Assumptions}
\Crefname{enumi}{}{}
\setlist[enumerate,1]{label=(\roman*)}
\begin{document}

\title{Using positive spanning sets to achieve d-stationarity with the Boosted DC Algorithm}

\author{F. J. Arag\'on Artacho$^{*}$
\and
	R. Campoy\footnote{Department of Mathematics, University of Alicante, Alicante, Spain. \newline\indent{~~} Email: \texttt{francisco.aragon@ua.es} and \texttt{ruben.campoy@ua.es}}
\and	
 P. T. Vuong\footnote{Department of Mathematics, University of Vienna, Austria and School of Mathamtical Sciences, University of Southampton, SO17 1BJ, Southampton, UK. \newline\indent{~~} Email: \texttt{t.v.phan@soton.ac.uk}}
 }

\date{\today\\ \bigskip \emph{This paper is dedicated to Professor Marco A. L\'opez Cerd\'a\\ \vspace{3pt} on the occasion of his 70th birthday}}

\maketitle

\begin{abstract}
The Difference of Convex functions Algorithm (DCA) is widely used for minimizing the difference of two convex functions. A recently proposed accelerated version, termed BDCA for Boosted DC Algorithm, incorporates a line search step to achieve a larger decrease of the objective value at each iteration. Thanks to this step, BDCA usually converges much faster than DCA in practice. The solutions found by DCA are guaranteed to be critical points of the problem, but these may not be local minima. Although BDCA tends to improve the objective value of the solutions it finds, these are frequently just critical points as well. In this paper we combine BDCA with a simple Derivative-Free Optimization (DFO) algorithm to force the d-stationarity (lack of descent direction) at the point obtained. The potential of this approach is illustrated through some computational experiments on a Minimum-Sum-of-Squares clustering problem. Our numerical results demonstrate that the new method provides better solutions while still remains faster than DCA in the majority of test cases.
\end{abstract}

{\bf Keywords:}
Difference of convex functions;  boosted difference of convex functions algorithm; positive spanning sets; d-stationary points; derivative-free optimization.

\section{Introduction}\label{sec:intro}
In this paper, we are interested in solving the following unconstrained DC (difference of convex functions) optimization problem:
\begin{equation}\label{eq:P}\tag{$\mathcal{P}$}
\displaystyle\min_{x\in\R^m} \left\{\phi(x) := g(x) - h(x)\right\}
\end{equation}
where $g:\R^m\to\R\cup\{+\infty\}$ and $h:\R^m\to\R\cup\{+\infty\}$ are proper, closed and convex functions, and $g$ is smooth, with the conventions:
\begin{gather*}
(+\infty)-(+\infty)=+\infty,\\
(+\infty)-\lambda=+\infty\quad\text{and}\quad\lambda-(+\infty)=-\infty,\quad\forall\lambda\in{]-\infty,+\infty[}.
\end{gather*}

Problem~\eqref{eq:P} can be tackled by the well-known \emph{DC algorithm (DCA)}~\cite{TT97,DC86}. DC programming has become an active research field for the last few decades~\cite{An2018} and DCA has been successfully applied to many real-world problems arising in different fields (see, e.g.,~\cite{tao2005dc}). Although DCA performs well in practice, its convergence can be fairly slow for some particular problems. In order to speed up the scheme, an accelerated version of the algorithm, called \emph{Boosted DC algorithm (BDCA)}, has been recently proposed in~\cite{BDCA2018,nBDCA}. The BDCA performs a line search at the point generated by the classical DCA, which allows to achieve a larger decrease in the objective value at each iteration. In the numerical experiments reported in~\cite{BDCA2018,nBDCA} it was shown that BDCA was not only faster than DCA, but also often found solutions with lower objective value. However, although both algorithms are proved to converge to critical points of~\eqref{eq:P}, there is no guarantee that these points are local minima. For this reason, a simple trick to achieve better solutions consists in running the algorithms from different starting points. Another approach has been recently used in~\cite{Welington}, where the authors incorporated an inertial term into the algorithm making it converge to \emph{better} critical points. In the recent work~\cite{WelingtonJogo}, the author proposed a DC scheme which is able to compute d-stationary points. Although this algorithm permits to address problems where the function $g$ is nonsmooth, the second component function $h$ needs to be the pointwise maximum of finitely many differentiable functions.

The aim of this paper is to show that it is possible to combine BDCA with a simple DFO (Derivative-Free Optimization) routine to guarantee {d-stationarity}  at the limit point obtained by the algorithm.
As a representative application, we perform a set of numerical experiments on the Minimum Sum-of-Squares Clustering problem studied in~\cite{nBDCA} to illustrate this observation.
This problem has many critical points, where both DCA and BDCA tend to easily get trapped in.
As a byproduct of the DFO step, we observe that in some problems a single run of the new algorithm is able to provide better solutions than those obtained by multiple restarts of DCA.

The rest of this paper is organized as follows. In Section~\ref{sec:pre_results} we recall some preliminary results. We propose a new variant of BDCA, named BDCA$+$, in Section~\ref{sec:BDCA}. The results of some numerical experiments are presented in Section~\ref{sec:application1}, where we compare the performance of DCA, BDCA and BDCA$+$ on several test cases. We finish with some conclusions in Section~\ref{sec:concl}.

\section{Preliminaries}\label{sec:pre_results}

Throughout this paper, $\langle x, y\rangle$ denotes the inner product of $x,y\in\R^m$, and $\norms{\cdot}$ corresponds to the induced norm given by $\norms{x}=\sqrt{\langle x,x\rangle}$. For any extended real-valued function $f:\R^m\to\R\cup\{+\infty\}$, the set $\dom{f} := \set{x\in\R^m \mid f(x) < +\infty}$ denotes the (effective) \emph{domain} of $f$. A function $f$ is \emph{proper} if its domain is nonempty. The function $f$ is \emph{coercive} if $f(x)\to +\infty$ whenever $\norms{x}\to +\infty$, and it is said to be \emph{convex}~if
$$f(\lambda x+(1-\lambda)y)\leq \lambda f(x) +(1-\lambda)f(y), \quad \text{for all } x,y\in\R^m \text{ and } \lambda\in[0,1].$$
Further, $f$ is \emph{strongly convex} with strong convexity parameter $\rho>0$ if $f-\frac{\rho}{2}\norms{\cdot}^2$ is convex, i.e., when
$$f(\lambda x+(1-\lambda)y)\leq \lambda f(x) +(1-\lambda)f(y)-\frac{\rho}{2}\lambda(1-\lambda)\norms{x-y}^2,$$
for all $x,y\in\R^m$ and $\lambda\in[0,1]$. For any convex function $f$, the \emph{subdifferential} of $f$ at $x\in\R^m$ is the set
$$\partial{f}(x) := \set{ w\in\R^m \mid f(y) \geq f(x) + \iprod{w,y-x},~\forall y\in\R^m}.$$
If $f$ is differentiable at $x$, then $\partial f(x)=\{\nabla f(x)\}$, where $\nabla f(x)$ denotes the \emph{gradient} of $f$ at $x$. The one-sided \emph{directional derivative} of $f$ at $x$ with respect to the direction $d\in\R^m$ is defined by
$$f'(x;d):=\lim_{t\searrow 0}\frac{f(x+td)-f(x)}{t}.$$

Before going to the main contribution of this paper in Section~\ref{sec:BDCA}, we state our assumptions imposed on \eqref{eq:P}. We also recall some preliminary notions and basic results  which will be used in the sequel.

\subsection{Basic Assumptions}

\begin{assumption}\label{as:A1}
Both functions $g$ and $h$  in \eqref{eq:P} are strongly convex on their domain for the same strong convexity parameter $\rho >0$.
\end{assumption}

\begin{assumption}\label{as:A2}
	The function $h$ is subdifferentiable at every point in $\dom{h}$; that is, $\partial h(x)\neq\emptyset$ for all $x\in\dom{h}$.
\end{assumption}

\begin{assumption}\label{as:A3}	The function $g$ is continuously differentiable on an open set containing $\dom{h}$  and
	\begin{equation*}\label{eq:phi_bounded_below}
	\phi^{\star} := \inf_{x\in\R^{m}}\phi(x) > -\infty.
	\end{equation*}
\end{assumption}

\Cref{as:A1} is not restrictive, as one can always rewrite the objective function as $\phi = (g+q) - (h+q)$ for any strongly convex function $q$ (e.g., $q=\frac{\rho}{2}\norms{\cdot}^2$). Observe that \Cref{as:A2} holds for all $x\in\ri\dom{h}$ (by \cite[Theorem~23.4]{RT}). A key property for our method is the smoothness of $g$ in \Cref{as:A3}, which cannot be in general omitted (see~\cite[Example~3.2]{nBDCA}).

\subsection{Optimality Conditions}

Under~\cref{as:A2,as:A3} the following well-known necessary condition for local optimality holds.

\begin{fact}[First-order necessary optimality condition]
If $x^\star\in\dom{\phi}$ is a local minimizer of problem~\eqref{eq:P}, then
\begin{equation}\label{eq:OC}
\partial{h}(x^\star)=\{\nabla{g}(x^\star)\}.
\end{equation}
\end{fact}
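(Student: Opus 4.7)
The plan is to combine the local optimality of $x^\star$ with the smoothness of $g$ and the subgradient inequality for $h$, obtaining a variational inequality that every $w\in\partial h(x^\star)$ must satisfy, and then to specialize the direction to pin $w$ down to $\nabla g(x^\star)$.

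First I would argue that $x^\star\in\dom h$, so that $\partial h(x^\star)\neq\emptyset$ by \cref{as:A2}. Since $x^\star\in\dom\phi$ we have $\phi(x^\star)<+\infty$ and hence $g(x^\star)<+\infty$; if $h(x^\star)$ were $+\infty$, the stated conventions would give $\phi(x^\star)=-\infty$, contradicting local optimality together with the bound $\phi^\star>-\infty$ from \cref{as:A3}. By \cref{as:A3}, $g$ is also continuously differentiable on a neighborhood of $x^\star$.

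Next, fix $w\in\partial h(x^\star)$ and a direction $d\in\R^m$ with $x^\star+td\in\dom h$ for all sufficiently small $t>0$. Local optimality gives $\phi(x^\star+td)\geq\phi(x^\star)$ and the subgradient inequality gives $h(x^\star+td)-h(x^\star)\geq t\iprod{w,d}$. Combining these with the first-order Taylor expansion $g(x^\star+td)-g(x^\star)=t\iprod{\nabla g(x^\star),d}+o(t)$ yields $0\leq t\iprod{\nabla g(x^\star)-w,d}+o(t)$; dividing by $t>0$ and letting $t\searrow 0$ produces
$$\iprods{\nabla g(x^\star)-w,d}\geq 0.$$

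To finish, I would specialize to $d:=w-\nabla g(x^\star)$; when $x^\star$ lies in the interior of $\dom h$ this $d$ is admissible and the inequality collapses to $-\norms{\nabla g(x^\star)-w}^2\geq 0$, forcing $w=\nabla g(x^\star)$. Since $w\in\partial h(x^\star)$ was arbitrary this shows $\partial h(x^\star)\subseteq\{\nabla g(x^\star)\}$, and combined with nonemptiness from \cref{as:A2} the claimed equality follows. The main obstacle is the boundary case $x^\star\in\dom h\setminus\mathrm{int}(\dom h)$, where the chosen direction $w-\nabla g(x^\star)$ need not be admissible; the singleton form of the conclusion itself signals that one either restricts to interior points or invokes a constraint qualification ensuring that the admissible cone is all of $\R^m$.
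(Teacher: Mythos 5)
Your derivation of the variational inequality $\iprods{\nabla g(x^\star)-w,d}\geq 0$ for admissible directions is correct, but, as you yourself flag, the argument only concludes when $x^\star$ lies in the interior of $\dom{h}$, whereas the statement carries no such restriction — so as written there is a gap. The missing observation is that under the paper's standing assumptions the interiority you need is automatic, so no additional constraint qualification has to be invoked. Indeed, by \cref{as:A3} the function $g$ is continuously differentiable, hence finite, on an open set $U\supseteq\dom{h}$, and you have already shown $x^\star\in\dom{h}\subseteq U$ with $\phi(x^\star)$ finite. Choose a ball $B(x^\star,r)\subseteq U$ small enough that $\phi(y)\geq\phi(x^\star)$ for all $y\in B(x^\star,r)$. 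If some $y\in B(x^\star,r)$ had $h(y)=+\infty$, then $g(y)<+\infty$ and the convention $\lambda-(+\infty)=-\infty$ would force $\phi(y)=-\infty<\phi(x^\star)$, a contradiction; hence $B(x^\star,r)\subseteq\dom{h}$ and $x^\star\in\operatorname{int}(\dom{h})$. With that, every direction is admissible and your specialization $d:=w-\nabla g(x^\star)$ closes the proof exactly as you intended.

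For comparison, the paper itself does not argue this Fact but cites Toland's Theorem~3$'$; the classical route there is slightly slicker and avoids directional limits altogether: for each $w\in\partial h(x^\star)$, the subgradient inequality $h(y)\geq h(x^\star)+\iprods{w,y-x^\star}$ combined with local minimality shows that $x^\star$ is a local minimizer of the differentiable function $y\mapsto g(y)-\iprods{w,y}$ (the same interiority observation is what lets you discard nearby points outside $\dom{h}$), and Fermat's rule then gives $w=\nabla g(x^\star)$ directly. Your directional-derivative version is equivalent once the interiority step is supplied, so the fix above is all that is missing.
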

\begin{proof}
See {\cite[Theorem~3']{Toland79}}.
\end{proof}

Any point satisfying condition~\eqref{eq:OC} is called a \emph{d(irectional)-stationary point} of \eqref{eq:P}. We say that $x^{\star}$ is a \emph{critical point} of \eqref{eq:P} if
$$\nabla{g}(x^{\star}) \in \partial{h}(x^{\star}).$$
Clearly, d-stationary points are critical points, but the converse is not true in general (see, e.g.,~\cite[Example~1]{BTU16}). In our setting, the notion of critical point coincides with that of \emph{Clarke stationarity}, which requires that zero belongs to the Clarke subdifferential at $x^\star$ (see, e.g.,~\cite[Proposition~2]{GGMB18}). The next result establishes that the d-stationary points of~\eqref{eq:P} are precisely those points for which the directional derivative is zero for every direction.

\begin{proposition}\label{fact:stationary_nodescent}
A point $x^\star\in\dom{\phi}$ is a d-stationary point of~\eqref{eq:P} if and only if
\begin{equation}\label{eq:no_descent}
\phi'(x^\star; d)= 0,\quad\text{for all } d\in \R^m.
\end{equation}
\end{proposition}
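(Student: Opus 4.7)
The plan is to decompose the directional derivative of $\phi$ as the difference of the directional derivatives of $g$ and $h$, and then exploit the characterization of $h'(x^\star; \cdot)$ as the support function of $\partial h(x^\star)$. Under \Cref{as:A3}, since $x^\star\in\dom\phi\subseteq\dom h$, the function $g$ is differentiable at $x^\star$, hence $g'(x^\star;d)=\langle \nabla g(x^\star),d\rangle$ for every $d\in\R^m$. Under \Cref{as:A2}, $\partial h(x^\star)\neq\emptyset$, so the well-known formula
\begin{equation*}
h'(x^\star;d)=\sup_{w\in\partial h(x^\star)}\langle w,d\rangle
\end{equation*}
holds (this is the support function of $\partial h(x^\star)$). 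Combining these, we obtain
\begin{equation*}
\phi'(x^\star;d)=\langle \nabla g(x^\star),d\rangle-\sup_{w\in\partial h(x^\star)}\langle w,d\rangle.
\end{equation*}

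For the forward implication, I would assume that $\partial h(x^\star)=\{\nabla g(x^\star)\}$. Plugging this singleton into the supremum above immediately yields $\phi'(x^\star;d)=\langle\nabla g(x^\star),d\rangle-\langle\nabla g(x^\star),d\rangle=0$ for every $d\in\R^m$.

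For the reverse implication, assume $\phi'(x^\star;d)=0$ for every $d\in\R^m$. Then
\begin{equation*}
\sup_{w\in\partial h(x^\star)}\langle w,d\rangle=\langle \nabla g(x^\star),d\rangle\quad\text{for all }d\in\R^m,
\end{equation*}
i.e., the support functions of the nonempty closed convex set $\partial h(x^\star)$ and of the singleton $\{\nabla g(x^\star)\}$ coincide. Since a nonempty closed convex set is determined by its support function, one concludes $\partial h(x^\star)=\{\nabla g(x^\star)\}$, which is precisely the d-stationarity condition~\eqref{eq:OC}.

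The main subtlety is really just bookkeeping: one must verify that the formula $h'(x^\star;d)=\sigma_{\partial h(x^\star)}(d)$ applies, which is guaranteed by \Cref{as:A2} (together with convexity of $h$), and that $g$ is differentiable at $x^\star$, which follows from \Cref{as:A3}. Everything else reduces to the uniqueness of a closed convex set as the polar object associated with its support function, a standard fact from convex analysis (see, e.g., \cite[Section~13]{RT}).
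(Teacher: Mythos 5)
Your route (identify $h'(x^\star;\cdot)$ with the support function of $\partial h(x^\star)$ and compare with the linear function $\langle\nabla g(x^\star),\cdot\rangle$) is in spirit the same as the paper's, but the pivotal identity $h'(x^\star;d)=\sup_{w\in\partial h(x^\star)}\langle w,d\rangle$ is \emph{not} guaranteed by \Cref{as:A2} together with convexity, and this is a genuine gap as written. Under \Cref{as:A2} the subgradient inequality only gives $h'(x^\star;d)\geq\langle w,d\rangle$ for each $w\in\partial h(x^\star)$; what \cite[Theorem~23.4]{RT} identifies with the support function of $\partial h(x^\star)$ is the \emph{closure} of $h'(x^\star;\cdot)$, and exact equality can fail when $x^\star\notin\ri\dom{h}$ even though $\partial h(x^\star)\neq\emptyset$. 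For instance, for $h=\delta_C+\tfrac{\rho}{2}\norms{\cdot}^2$ with $C=\{(x,y):y\geq x^2\}$, at the origin the subdifferential is nonempty (it contains all $(0,\lambda)$ with $\lambda\leq 0$), yet $h'((0,0);(1,0))=+\infty$ while the support function in that direction equals $0$. So the sentence ``the formula applies, which is guaranteed by \Cref{as:A2}'' is false in the stated generality; some additional ingredient is needed at exactly the point where your argument leans on equality.

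The good news is that each implication can be repaired, and the repairs are precisely what the paper does. For the reverse implication you never need equality: the inequality $h'(x^\star;d)\geq\langle w,d\rangle$ alone, combined with $\phi'(x^\star;d)=0$ for all $d$, yields $\langle\nabla g(x^\star)-w,d\rangle\geq 0$ for all $d$ and every $w\in\partial h(x^\star)$, hence $w=\nabla g(x^\star)$, and \Cref{as:A2} upgrades the inclusion $\partial h(x^\star)\subseteq\{\nabla g(x^\star)\}$ to the equality \eqref{eq:OC}; this is exactly the paper's argument, so your support-function comparison is an unnecessary detour that smuggles in the unproved equality. For the forward implication the equality is needed, but it follows from the hypothesis rather than from \Cref{as:A2}: a convex function whose subdifferential at a point is a singleton is differentiable there (\cite[Theorem~25.1]{RT}, the fact the paper invokes), after which $h'(x^\star;d)=\langle\nabla h(x^\star),d\rangle$ and your computation gives \eqref{eq:no_descent}. (Alternatively, one can check that \Cref{as:A3} together with $\phi^\star>-\infty$ and the sign conventions forces $\dom{h}$ to be open, so every $x^\star\in\dom{\phi}$ lies in the interior of $\dom{h}$ and the exact formula does apply; but that argument is absent from your write-up and is not what your citation of \Cref{as:A2} asserts.)
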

\begin{proof}
If $x^\star$ is a d-stationary point of~\eqref{eq:P}, then by~\cite[Theorem~25.1]{RT} we know that $h$ is differentiable at $x^\star$. Therefore, for any $d\in\R^m$, we have
$$\phi'(x^\star; d) = \langle \nabla{g}(x^\star), d \rangle- \langle \nabla{h}(x^\star), d \rangle=0.$$

For the converse implication, pick any $v\in\partial h(x^\star)\neq\emptyset$ (by \Cref{as:A2}) and observe that, for any $d\in\R^m$, we have that
\begin{align*}
\phi'(x^\star; d) & = g'(x^\star;d)-h'(x^\star;d)\\
& = \langle \nabla{g}(x^\star), d \rangle -\lim_{t\searrow 0}\frac{h(x^\star+td)-h(x^\star)}{t}\\
& \leq  \langle \nabla{g}(x^\star)-v, d \rangle .
\end{align*}
Hence, if $x^\star$ satisfies~\eqref{eq:no_descent}, one must have
$$ \langle   \nabla{g}(x^\star)-v, d \rangle \geq 0,\quad \text{for all } d\in\R^m,$$
which is equivalent to $ \nabla{g}(x^\star)-v = 0$.
As $v$ was arbitrarily chosen in $\partial h(x^\star)$, we conclude that $\partial h(x^*)=\{\nabla g(x^\star)\}$.
\end{proof}

\subsection{DCA and Boosted DCA}

In this section, we recall the iterative procedure DCA and its accelerated extension, BDCA, for solving problem~\eqref{eq:P}. The DCA iterates by solving a sequence of approximating convex subproblems, as described next in Algorithm~\ref{alg:DCA}.

\begin{algorithm}[ht!]\caption{DCA (DC Algorithm)}\label{alg:DCA}
\KwIn{An initial point $x_{0}\in \R^m$ and a desired tolerance $\varepsilon\geq 0$;}
\Begin{$k\leftarrow 0$;

Select $u_k \in \partial h(x_k)$ and compute the unique solution $y_k$ of
\begin{equation}\label{eq:sub_prob_DCA}
\tag{$\mathcal{P}_{k}$} ~
\displaystyle\min_{x\in\R^m} \left\{\phi_k(x) := g(x) - \iprods{u_k, x}\right\};
\end{equation}

\uIf{$\|y_{k}-x_k\|\leq\varepsilon$}{
\textbf{stop} and \textbf{return} $y_k$;}
\Else{$x_{k+1}=y_k$;}

$k\leftarrow k+1$ and \textbf{go to} line~3;}
\end{algorithm}

The key feature that makes the DCA work, stated next in~\Cref{fact:DCA}\Cref{fact:DCA_i}, is that the solution of~\eqref{eq:sub_prob_DCA} provides a decrease in the objective value of~\eqref{eq:P} along the iterations. Actually, an analogous result holds for the dual problem, see~\cite[Theorem~3]{TT97}. In~\cite{BDCA2018}, the authors showed that the direction generated by the iterates of DCA, namely $d_k:=y_k-x_k$, provides a descent direction of the objective function at $y_k$ when the functions $g$ and $h$ in~\eqref{eq:P} are assumed to be smooth. This result was later generalized in \cite{nBDCA} to the case where $h$ satisfies~\Cref{as:A2}. The following result collects these properties.

\begin{fact}\label{fact:DCA}
Let $x_k$ and $y_k$ be the sequences generated by~\Cref{alg:DCA} and set $d_k:=y_k-x_k$, for all $k\in\N$. Then the following statements hold:
\begin{enumerate}[label=(\alph*)]
\item $\phi(y_k)\leq \phi(x_k)-\rho\norms{d_k}^2$;\label{fact:DCA_i}
\item $\phi'(y_k;d_k)\leq -\rho\norms{d_k}^2$;\label{fact:DCA_ii}
\item there exists some $\delta_k>0$ such that\label{fact:DCA_iii}
$$\phi(y_k+\lambda d_k)\leq \phi(y_k)-\alpha\lambda^2\norms{d_k}^2,\quad\text{for all } \lambda\in{[0,\delta_k[}.$$
\end{enumerate}
\end{fact}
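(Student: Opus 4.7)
The plan is to exploit the strong convexity of both $g$ and $h$ together with the first-order optimality of $y_k$ for the strongly convex subproblem \eqref{eq:sub_prob_DCA}. Since $\phi_k$ is $\rho$-strongly convex and differentiable with unique minimizer $y_k$, one has $\nabla g(y_k)=u_k$, and this identity together with $u_k\in\partial h(x_k)$ drives every step below.

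For \textbf{(a)}, I would write
\[
\phi(y_k)-\phi(x_k) = \bigl[\phi_k(y_k)-\phi_k(x_k)\bigr] - \bigl[h(y_k)-h(x_k)-\iprods{u_k,d_k}\bigr]
\]
and bound each bracket separately. The first bracket is at most $-\tfrac{\rho}{2}\norms{d_k}^2$ by applying the strong-convexity inequality for $\phi_k$ at $x_k$ and $y_k$ combined with $\nabla\phi_k(y_k)=0$. The second bracket is at least $\tfrac{\rho}{2}\norms{d_k}^2$ by the strong subgradient inequality for $h$ at $x_k$ with subgradient $u_k$. Adding the two estimates yields the claimed bound $\phi(y_k)\le\phi(x_k)-\rho\norms{d_k}^2$.

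For \textbf{(b)}, by \Cref{as:A2} I can pick any $v\in\partial h(y_k)$. The strong monotonicity of $\partial h$, which follows from the strong convexity of $h$ in \Cref{as:A1}, gives $\iprods{v-u_k,d_k}\ge\rho\norms{d_k}^2$. Since $h$ is convex and subdifferentiable at $y_k$, the one-sided directional derivative satisfies $h'(y_k;d_k)\ge\iprods{v,d_k}$, and combining this with $\phi'(y_k;d_k)=\iprods{\nabla g(y_k),d_k}-h'(y_k;d_k)$ and $\nabla g(y_k)=u_k$ delivers $\phi'(y_k;d_k)\le-\rho\norms{d_k}^2$.

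For \textbf{(c)}, the case $d_k=0$ is trivial, so assume $d_k\ne 0$. From the definition of the directional derivative,
\[
\phi(y_k+\lambda d_k)=\phi(y_k)+\lambda\phi'(y_k;d_k)+o(\lambda) \quad\text{as } \lambda\searrow 0,
\]
which together with \textbf{(b)} is at most $\phi(y_k)-\rho\lambda\norms{d_k}^2+o(\lambda)$. For any prescribed positive constant $\alpha$, the linear negative term dominates $\alpha\lambda^2\norms{d_k}^2$ for $\lambda$ sufficiently small, so a suitable $\delta_k>0$ exists. The main subtlety lies in the bound on the directional derivative in \textbf{(b)}: one cannot in general invoke differentiability of $h$ at $y_k$, and must instead use that $h'(y_k;\cdot)$ dominates every subgradient evaluation, combined with the non-emptiness of $\partial h(y_k)$ provided by \Cref{as:A2}.
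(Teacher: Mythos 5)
Your argument is correct and follows essentially the same route as the proof the paper points to (\cite[Proposition~3.1]{nBDCA}): part (a) from the subproblem optimality $\nabla g(y_k)=u_k$ combined with the strong-convexity inequalities (your grouping via $\phi_k$ is just a repackaging of applying them to $g$ and $h$ separately), part (b) from the strong monotonicity of $\partial h$ together with $h'(y_k;\cdot)\geq\iprods{v,\cdot}$ for $v\in\partial h(y_k)$ (nonempty by \Cref{as:A2}), and part (c) from the definition of the directional derivative and the strict negativity in (b). No gaps of substance; the only implicit point, shared with the original proof, is that $y_k$ lies in the region where $g$ is differentiable so that $u_k\in\partial g(y_k)$ indeed gives $\nabla g(y_k)=u_k$.
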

\begin{proof}
See~\cite[Proposition~3.1]{nBDCA}.
\end{proof}

Thanks to~\Cref{fact:DCA}, once $y_k$ has been found by DCA, one can achieve a larger decrease in the objective value of~\eqref{eq:P} by moving along the descent direction $d_k$. Indeed, observe that
$$\phi(y_k+\lambda d_k)\leq \phi(y_k)-\alpha\lambda^2\norms{d_k}^2\leq \phi(x_k)-(\rho+\alpha\lambda^2)\norms{d_k}^2,\quad\text{for all } \lambda\in{[0,\delta_k[}.$$
This fact is the main idea of the BDCA~\cite{BDCA2018,nBDCA}, whose iteration is described next in~\Cref{alg:BDCA}.

\begin{algorithm}[ht!]\caption{BDCA (Boosted DC Algorithm)}\label{alg:BDCA}
\KwIn{An initial point $x_{0}\in \R^m$ and a desired tolerance $\varepsilon\geq 0$. Choose some parameters $\alpha>0$ and $\beta\in{]0,1[}$;}
\Begin{$k\leftarrow 0$;

{Select $u_k \in \partial h(x_k)$ and compute the unique solution $y_k$ of
\begin{equation}\label{eq:sub_prob_BDCA}
\tag{$\mathcal{P}_{k}$} ~
\displaystyle\min_{x\in\R^m} \left\{\phi_k(x) := g(x) - \iprods{u_k, x}\right\};
\end{equation}}\\
$d_{k}\leftarrow y_{k}-x_{k}$;

\uIf{$\|d_{k}\|>\varepsilon$}{
Choose any $\overline{\lambda}_k\geq 0$ and set $\lambda_k\leftarrow\overline{\lambda}_k$; \\
\While{$\phi(y_{k}+\lambda_{k}d_{k})>\phi(y_{k})-\alpha\lambda_{k}^2\|d_{k}\|^{2}$}{$\lambda_{k}\leftarrow\beta\lambda_{k}$;}
$x_{k+1}\leftarrow y_{k}+\lambda_{k}d_{k}$;}
\Else{\textbf{stop} and \textbf{return} $y_k$;}
$k\leftarrow k+1$ and \textbf{go to} line~3;}
\end{algorithm}

Algorithmically, the BDCA is nothing more than the classical DCA with a line search procedure using an Armijo type rule. Note that the backtracking step in \Cref{alg:BDCA} (lines 6--9) terminates finitely thanks to~\Cref{fact:DCA}\Cref{fact:DCA_iii}.

We state next the basic convergence results for the sequences generated by BDCA (for more, see~\cite{BDCA2018,nBDCA}). Observe that DCA can be seen as a particular case of BDCA if one sets $\overline{\lambda}_k=0$, so the following result applies to both~\Cref{alg:DCA,alg:BDCA}.

\begin{fact}\label{fact:BDCA}
For any $x_0\in\R^m$, either \Cref{alg:BDCA} (BDCA) 
with $\varepsilon=0$ returns a critical point of~\eqref{eq:P}, or it generates an infinite sequence such that the following properties hold.
\begin{enumerate}[label=(\alph*)]
\item $\{\phi(x_k)\}$ is monotonically decreasing and convergent to some $\phi^\star$.
\item Any limit point of $\{x_k\}$ is a critical point of \eqref{eq:P}. In addition, if $\phi$ is coercive then there exists a subsequence of $\{x_k\}$ which converges to a critical point of \eqref{eq:P}.
\item It holds that $\sum_{k=0}^{+\infty}\norms{d_k}^2<+\infty$. Furthermore, if there is some $\overline{\lambda}$ such that $\lambda_k\leq\overline{\lambda}$ for all $k\geq 0$, then $\sum_{k=0}^{+\infty}\|x_{k+1}-x_{k}\|^{2}<+\infty$.
\end{enumerate}
\end{fact}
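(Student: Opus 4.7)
The plan is to exploit the descent guarantees in~\Cref{fact:DCA} together with the Armijo-type backtracking rule to derive monotone descent, a summable-squared step bound, and a closed-graph limiting argument for subdifferential inclusions.

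For part (a), I would chain the two descent estimates. By the line-search exit condition we have $\phi(x_{k+1}) = \phi(y_k + \lambda_k d_k) \leq \phi(y_k) - \alpha\lambda_k^2\|d_k\|^2$, and by~\Cref{fact:DCA}\Cref{fact:DCA_i} we have $\phi(y_k) \leq \phi(x_k) - \rho\|d_k\|^2$. Combining,
\begin{equation*}
\phi(x_{k+1}) \leq \phi(x_k) - \bigl(\rho + \alpha\lambda_k^2\bigr)\|d_k\|^2 \leq \phi(x_k),
\end{equation*}
so $\{\phi(x_k)\}$ is monotonically non-increasing. Since~\Cref{as:A3} guarantees $\phi$ is bounded below by $\phi^{\star}$, the sequence converges to some $\phi^\star\geq\phi^{\star}$ (I should avoid notation clash; I would rename the limit).

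For part (c), I telescope the inequality from (a): for any $N\in\N$,
\begin{equation*}
\rho\sum_{k=0}^{N}\|d_k\|^2 \leq \sum_{k=0}^{N}\bigl(\phi(x_k) - \phi(x_{k+1})\bigr) = \phi(x_0) - \phi(x_{N+1}) \leq \phi(x_0) - \phi^\star < +\infty.
\end{equation*}
Letting $N\to\infty$ gives $\sum_{k=0}^{\infty}\|d_k\|^2 < +\infty$. For the second assertion, since $x_{k+1} - x_k = y_k + \lambda_k d_k - x_k = (1+\lambda_k)d_k$, under the boundedness assumption $\lambda_k\leq\overline{\lambda}$ we have $\|x_{k+1}-x_k\|^2 \leq (1+\overline{\lambda})^2\|d_k\|^2$, and summability transfers.

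For part (b), the main obstacle is passing the inclusion $\nabla g(y_k) = u_k \in \partial h(x_k)$ (which comes from the optimality condition of the strongly convex subproblem~\eqref{eq:sub_prob_BDCA}) to a limit point. Let $x^\star$ be a limit point of $\{x_k\}$, so $x_{k_j}\to x^\star$ along some subsequence. By part (c) the terms $\|d_k\|^2$ are summable, in particular $\|d_k\|\to 0$, so $y_{k_j}\to x^\star$ as well. Continuity of $\nabla g$ (from~\Cref{as:A3}) gives $u_{k_j} = \nabla g(y_{k_j}) \to \nabla g(x^\star)$. The subdifferential of a proper closed convex function has a closed graph, so from $u_{k_j}\in\partial h(x_{k_j})$ and $(x_{k_j},u_{k_j})\to(x^\star,\nabla g(x^\star))$ we conclude $\nabla g(x^\star)\in\partial h(x^\star)$, i.e., $x^\star$ is a critical point of~\eqref{eq:P}. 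Finally, if $\phi$ is coercive, the level set $\{x:\phi(x)\leq\phi(x_0)\}$ is bounded; since $\{\phi(x_k)\}$ is non-increasing, $\{x_k\}$ is contained in this bounded set and thus admits a convergent subsequence, whose limit is a critical point by what was just shown. The only subtle point is justifying $y_{k_j}\to x^\star$ in the coercive case when we only a priori control $x_{k_j}$; this follows from $\|d_k\|\to 0$ once more.
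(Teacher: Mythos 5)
Your argument is correct and follows essentially the same route as the paper's source: the paper proves this Fact simply by citing \cite[Theorem~3.6]{nBDCA}, whose proof combines the descent estimate of \Cref{fact:DCA} with the Armijo exit condition $\phi(x_{k+1})\leq\phi(y_k)-\alpha\lambda_k^2\norms{d_k}^2$, telescopes to get summability of $\norms{d_k}^2$, and passes $u_{k}=\nabla g(y_{k})\in\partial h(x_{k})$ to the limit via continuity of $\nabla g$ and the closed graph of $\partial h$, exactly as you do. The only detail you leave implicit is the first alternative in the statement: with $\varepsilon=0$ the algorithm stops only when $d_k=0$, i.e.\ $y_k=x_k$, and then the subproblem optimality condition gives $\nabla g(y_k)=u_k\in\partial h(x_k)=\partial h(y_k)$, so the returned point is indeed critical.
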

\begin{proof}
See~\cite[Theorem~3.6]{nBDCA}.
\end{proof}

\subsection{Positive Spanning Sets}

Most directional direct search methods are based on the use of positive spanning sets (see, e.g.,~\cite[Section~5.6.3]{book} and~\cite[Chapter~7]{DFO}). Let us recall this concept here.

\begin{definition}
We call \emph{positive span} of a set of vectors $\{v_1,v_2,\ldots,v_r\}\subset\R^m$ the convex cone generated by this set, i.e.,
$$\left\{v\in\R^m: v=\alpha_1v_1+\cdots+\alpha_rv_r, \quad  \alpha_i\geq 0,\, i=1,2,\ldots,r\right\}.$$
A set of vectors in $\R^m$ is said to be a \emph{positive spanning set} if its positive span is the whole space $\R^m$. A set $\{v_1, v_2, \ldots, v_r\}$ is said to be \emph{positively dependent} if one of the vectors is in the positive span generated by the remaining vectors; otherwise, the set is called \emph{positively independent}. A \emph{positive basis} in $\R^m$ is a positively independent set whose positive span is $\R^m$.
\end{definition}

Three well-known examples of positive spanning sets are given next.

\begin{example}[Positive basis]\label{ex:basis}
Let $e_1,e_2,\ldots,e_m$ be the unit vectors of the standard basis in $\R^m$. Then the following sets are positive basis in $\R^m$:
\begin{subequations}\label{eq:positive_basis}
 	\begin{align}
 	D_1 &:=\{\pm e_1,\pm e_2,\ldots,\pm e_m\}, \label{eq:D1} \\
 	D_2 &:=\left\{e_1,e_2,\ldots,e_m, -\textstyle\sum_{i=1}^m e_i \right\}, \label{eq:D2} \\
 	D_3  &:=\left\{ v_1,v_2,\ldots,v_m,v_{m+1}\in\R^m,\quad  \text{ with } \begin{array}{c}
 	 v_i^Tv_j=\frac{-1}{m}, \text{ if } i\neq j, \\
 	  \|v_i\|=1, \, i=1,2,\ldots,m+1.\end{array}\right\} . \label{eq:D3}
 	\end{align}
\end{subequations}
A possible construction for $D_3$ is given in~\cite[Corollary~2.6]{DFO}.
\end{example}

Recall that the BDCA provides critical points of~\eqref{eq:P} which are not necessarily d-stationary points (\Cref{fact:BDCA}). Theoretically, see~\cite[Section~3.3]{TT97}, if $x^\star$ is a critical point which is not d-stationary, one could restart BDCA by taking $x_0:=x^\star$ and choose $y_0\in\partial h(x_0)\setminus\{\nabla g(x_0)\}$. Nonetheless, observe that this is only applicable when the algorithm converges in a finite number of iterations to $x^\star$, which does not happen very often in practice (except for polyhedral DC problems, where even a global solution can be effectively computed if $h$ is a piecewise linear function with a reasonable small number of pieces, see~\cite[\S4.2]{TT97}). Because of that, our goal is to design a variant of BDCA that generates a sequence converging to a d-stationary point. The following key result, proved in~\cite[Theorem~3.1]{BH18}, asserts that using positive spanning sets one can escape from points which are not d-stationary. We include its short proof.


\begin{fact}\label{fact:stationary_nodescent_PSD}
Let $\{v_1,v_2,\ldots,v_r\}$ be a positive spanning set of $\R^m$. A point $x^\star\in\dom{\phi}$ is a d-stationary point of~\eqref{eq:P} if and only if
\begin{equation}\label{eq:no_descent_PSD}
\phi'(x^\star; v_i)\geq 0,\quad\text{for all } i=1,2,\ldots,r.
\end{equation}
\end{fact}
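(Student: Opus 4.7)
The forward implication is essentially free: if $x^\star$ is d-stationary, then Proposition~\ref{fact:stationary_nodescent} gives $\phi'(x^\star;d)=0$ for every $d\in\R^m$, and in particular $\phi'(x^\star;v_i)=0\geq 0$ for each $i$.

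For the reverse implication, the plan is to bootstrap the inequality \eqref{eq:no_descent_PSD} on the finitely many directions $v_i$ into the inequality $\phi'(x^\star;d)\geq 0$ for \emph{every} direction $d\in\R^m$, and then deduce d-stationarity via the same argument already used in the proof of Proposition~\ref{fact:stationary_nodescent}. The key algebraic fact I will exploit is that $d\mapsto h'(x^\star;d)$ is sublinear (positively homogeneous and convex) because $h$ is convex, so $d\mapsto \phi'(x^\star;d)=\iprods{\nabla g(x^\star),d}-h'(x^\star;d)$ is superadditive on nonnegative combinations.

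Concretely, given an arbitrary $d\in\R^m$, the positive spanning property lets me write $d=\sum_{i=1}^r\alpha_i v_i$ with $\alpha_i\geq 0$. Using linearity of $\iprods{\nabla g(x^\star),\cdot}$ together with the sublinearity of $h'(x^\star;\cdot)$ I obtain
\[
\phi'(x^\star;d)\;\geq\; \sum_{i=1}^r \alpha_i\bigl(\iprods{\nabla g(x^\star),v_i}-h'(x^\star;v_i)\bigr)\;=\;\sum_{i=1}^r\alpha_i\,\phi'(x^\star;v_i)\;\geq\;0,
\]
where the last inequality uses both $\alpha_i\geq 0$ and the hypothesis \eqref{eq:no_descent_PSD}. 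Hence $\phi'(x^\star;d)\geq 0$ for all $d\in\R^m$.

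To finish, I reuse the bound from the proof of Proposition~\ref{fact:stationary_nodescent}: for any $v\in\partial h(x^\star)$ (nonempty by \Cref{as:A2}) and any $d\in\R^m$, $\phi'(x^\star;d)\leq \iprods{\nabla g(x^\star)-v,d}$. Applying this to both $d$ and $-d$ and combining with $\phi'(x^\star;\pm d)\geq 0$ yields $\iprods{\nabla g(x^\star)-v,d}=0$ for every $d$, so $v=\nabla g(x^\star)$; since $v\in\partial h(x^\star)$ was arbitrary, $\partial h(x^\star)=\{\nabla g(x^\star)\}$, which is d-stationarity. The only step that required some care is the passage from the finite set of test directions $\{v_i\}$ to all of $\R^m$, and this is handled cleanly by sublinearity of $h'(x^\star;\cdot)$; no major obstacle beyond that.
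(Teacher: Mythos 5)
Your proof is correct and follows essentially the same route as the paper: the forward direction via Proposition~\ref{fact:stationary_nodescent}, and the reverse direction by decomposing an arbitrary $d$ as a nonnegative combination of the $v_i$ and using sublinearity of $h'(x^\star;\cdot)$ (the paper cites \cite[Theorem~23.1]{RT} for exactly this) to get $\phi'(x^\star;d)\geq\sum_i\alpha_i\phi'(x^\star;v_i)\geq 0$. Your closing step, re-deriving $\partial h(x^\star)=\{\nabla g(x^\star)\}$ from $\phi'(x^\star;\pm d)\geq 0$ via the subgradient bound, is in fact slightly more careful than the paper's terse ``then~\eqref{eq:no_descent} holds'', since only the inequality $\geq 0$ (not equality) has been established at that point, but the underlying argument is the same.
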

\begin{proof}
The direct implication is an immediate consequence of~\Cref{fact:stationary_nodescent}. For the reverse implication, pick any $x^\star\in\dom{\phi}$ verifying~\eqref{eq:no_descent_PSD} and choose any $d\in\R^m$. Since $\{v_1,v_2,\ldots,v_r\}$ is a positive spanning set, there are $\alpha_1,\alpha_2,\ldots,\alpha_r\geq 0$ such that
$$d=\alpha_1v_1+\alpha_2v_2+\cdots+\alpha_rv_r.$$
According to~\cite[Theorem~23.1]{RT}, we have that
$$h'(x^\star;d)\leq \alpha_1h'(x^\star;v_1)+\cdots+\alpha_rh'(x^\star;v_r).$$
Hence, we obtain
\begin{align*}
\phi'(x^\star;d) & =  g'(x^\star;d)-h'(x^\star;d)\\
& = \langle \nabla{g}(x^\star), \alpha_1v_1+\alpha_2v_2+\cdots+\alpha_rv_r \rangle-h'(x^\star;d) \\
& \geq \textstyle\sum_{i=1}^r \alpha_i \langle \nabla{g}(x^\star); v_i\rangle - \textstyle\sum_{i=1}^r \alpha_i h'(x^\star;v_i)\\
& = \textstyle\sum_{i=1}^r \alpha_i\phi'(x^\star;v_i)\geq 0.
\end{align*}
Since $d$ was arbitrarily chosen, then~\eqref{eq:no_descent} holds and $x^\star$ is a d-stationary point of~\eqref{eq:P}.
\end{proof}

\section{Forcing BDCA to converge to d-stationary points}\label{sec:BDCA}

In this section we propose a new variant of BDCA  to solve problem~\eqref{eq:P}, called BDCA$+$. The idea  is to combine BDCA with a basic DFO routine which uses positive spanning sets. The first scheme aims at achieving a fast minimization of the objective function $\phi$, while the second one is used to avoid converging to critical points for which there is at least a descent direction (i.e., they are not d-stationary points and, thus, they cannot be local minima). Let us make some comments about the new scheme BDCA$+$, which is stated in Algorithm~\ref{alg:BDCA2}.
\begin{itemize}
\item Subproblem~\eqref{eq:sub_prob_BDCA2} in line 3 corresponds to the classical DCA step for {solving}~\eqref{eq:P}.
\item Lines 5 to 10 encode the boosting line search step used in BDCA. If the current iterate is (numerically) not a critical point, then the algorithm performs a line search step at $y_k$ along the direction $d_k$ to improve the objective values of \eqref{eq:P}.
\item Line 11 to 19 correspond to a direct search DFO technique. It is run only when BDCA was stopped, in order to check if the point obtained is d-stationary. To this aim, it performs a backtracking search along each of the directions belonging to a positive spanning set $D$ of $\R^m$. If it reaches a point whose objective value is smaller, then we move to that point and run BDCA again from there. Otherwise, there is not descent direction in $D$ and, according to~\Cref{fact:stationary_nodescent_PSD}, the point we have found must be (numerically) d-stationary.
\item The choice $\overline{\lambda}_k=0$ for all $k$ is allowed, which corresponds to adding a direct search step to DCA.
\end{itemize}

\begin{algorithm}[ht!]\caption{BDCA$+$ (Boosted DC Algorithm combined with DFO)}\label{alg:BDCA2}
\KwIn{An initial point $x_{0}\in \R^m$, a positive spanning set $D$. Choose three nonnegative parameters $\varepsilon_1,\eta,\tau\geq 0$, three positive ones $\alpha,\varepsilon_2,\overline{\mu}>0$, and $\beta_1,\beta_2\in{]0,1[}$;}
\Begin{$k\leftarrow 0$, $\mu\leftarrow\overline{\mu}$;

Select $u_k \in \partial h(x_k)$ and compute the unique solution $y_k$ of
\begin{equation}\label{eq:sub_prob_BDCA2}
\tag{$\mathcal{P}_{k}$} ~
\displaystyle\min_{x\in\R^m} \left\{\phi_k(x) := g(x) - \iprods{u_k, x}\right\};
\end{equation}
$d_{k}\leftarrow y_{k}-x_{k}$;

\uIf{$\|d_{k}\|>\varepsilon_1$}{
Choose any $\overline{\lambda}_k\geq 0$ and set $\lambda_k\leftarrow\overline{\lambda}_k$; \\
\While{$\phi(y_{k}+\lambda_{k}d_{k})>\phi(y_{k})-\alpha\lambda_{k}^2\|d_{k}\|^{2}$}{$\lambda_{k}\leftarrow\beta_1\lambda_{k}$;}
$x_{k+1}\leftarrow y_{k}+\lambda_{k}d_{k}$;}
\Else{$\mu\leftarrow \eta\mu+\tau$; \\
\uIf{$\phi(y_{k}+\mu v)<\phi(y_{k})$ for some $v\in D$}{$x_{k+1}\leftarrow y_{k}+\mu v$;}
\uElseIf{$\mu>\varepsilon_2$}{$\mu\leftarrow\beta_2\mu$ and \textbf{go to} line~12;}
\Else{\textbf{stop} and \textbf{return} $y_k$;}}
$k\leftarrow k+1$ and \textbf{go to} line~3;}
\end{algorithm}

The following constructive example serves to illustrate the different behavior of DCA, BDCA and BDCA$+$.

\begin{example}[{{\cite[Example~3.3]{nBDCA}}}]\label{ex:ex1}
Consider the function $\phi:\R^2\to\R$ defined by
\begin{equation*}
\phi(x,y):=x^2+y^2+x+y-|x|-|y|.
\end{equation*}
Consider a corresponding DC decomposition $\phi=g-h$ of $\phi$ with
\begin{equation*}
g(x,y):=\frac{3}{2}(x^2+y^2)+x+y \quad\text{and}\quad h(x,y):=|x|+|y|+\frac{1}{2}(x^2+y^2).
\end{equation*}
Observe that $g$ and $h$ satisfy \Cref{as:A1,as:A2,as:A3}. It can be easily checked that $\phi$ has four critical points, namely $(0,0)$, $(-1,0)$, $(0,-1)$ and $(-1,-1)$, of which only the latter is a d-stationary point (and also the global minimum).

In~\Cref{fig:example} we show the iterations generated by DCA (\Cref{alg:DCA}) and BDCA$+$ (\Cref{alg:BDCA2}) from the same starting point $x_0=(0,1)$. The DCA converges to the critical point $(0,0)$. The BDCA escapes from this point but still gets stuck at $(0,-1)$, which is also a critical point which is not d-stationary. After applying once the DFO scheme (dashed line), we observe that BDCA successfully converges to the d-stationary point $(-1,-1)$, which is in fact the global minimum of the problem.

\begin{figure}
\centering
\includegraphics[width=0.6\textwidth]{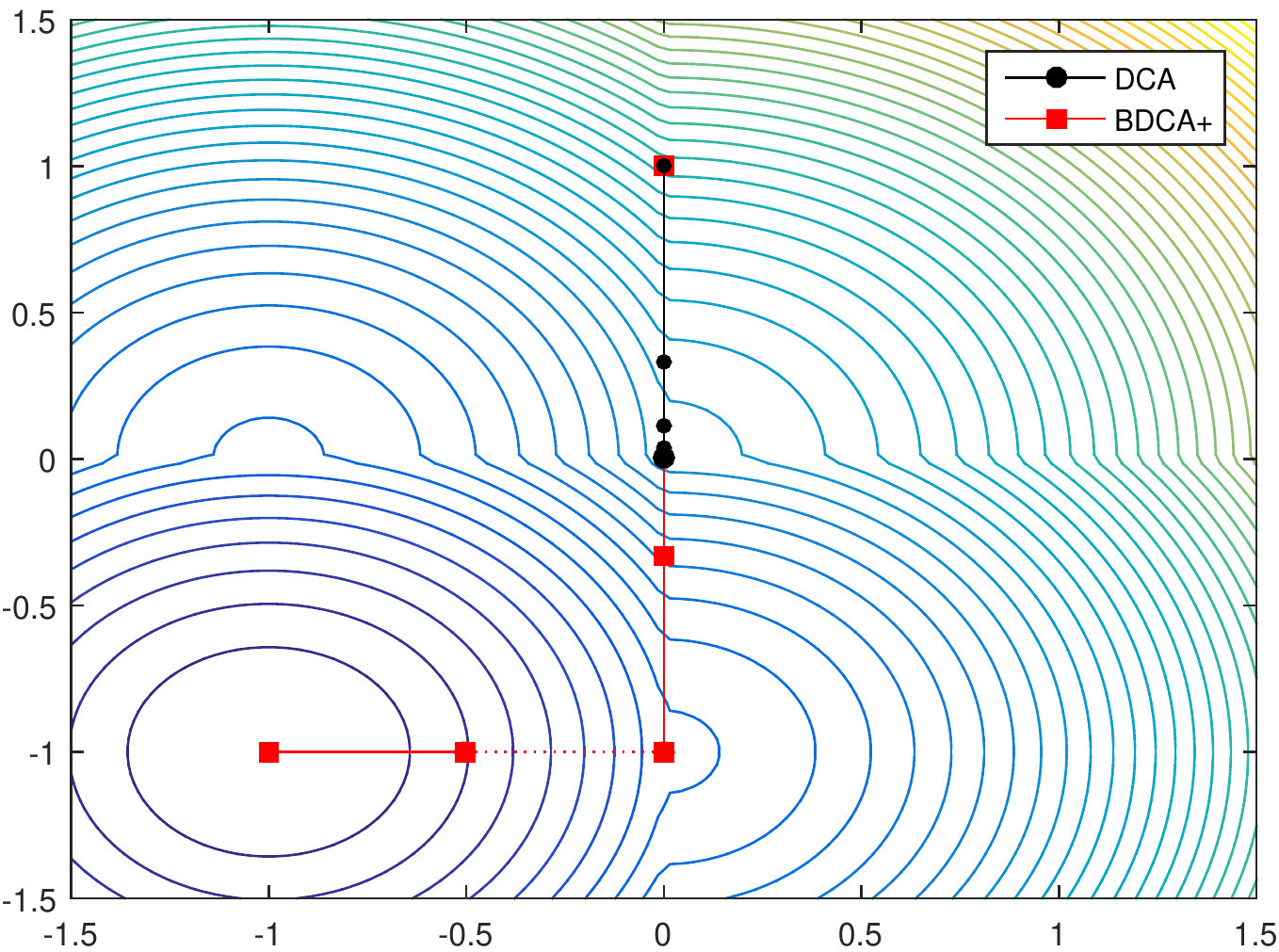}
\caption{Ilustration of~\Cref{ex:ex1}.}\label{fig:example}
\end{figure}

To demonstrate the advantage of BDCA$+$ we compute the number of instances, out of one million random starting points uniformly distributed in $[-1.5,1.5]\times[-1.5,1.5]$, that each algorithm has converged to each of the four critical points. The results are summarized in \Cref{tbl:example}.

\begin{table}[ht!]\centering
\begin{tabular}{|c|c|c|c|c|}
\cline{2-5}
\multicolumn{1}{c|}{}
& $(-1,-1)$ & $(-1,0)$ & $(0,-1)$ & $(0,0)$\\
\hline
DCA     & 249,821  & 250,671 & 249,944     & 249,564\\
BDCA    & 996,221  &   1,897 &   1,882     &   0\\
BDCA$+$ & 1,000,000&  0      &  0     &   0\\
\hline
\end{tabular}\caption{For one million random starting points in $[-1.5,1.5]\times[-1.5,1.5]$, we count the sequences generated by DCA, BDCA and BDCA$+$ converging to each of the four d-stationary points}\label{tbl:example}
\end{table}

From Table~\ref{tbl:example}, we observe that DCA converged to each of the four critical points with the same probability, while BDCA converged to the global minimum in $99.6\%$ of the instances. The best results where obtained by BDCA$+$, which always converged to the global minimum $(-1,-1)$.
\end{example}


\section{Numerical experiments}\label{sec:application1}
In this section, we provide the results of some numerical tests to compare the performance of BDCA$+$ (\Cref{alg:BDCA2}) and the classical DCA (\Cref{alg:DCA}). To this aim we turn to the same challenging clustering problem tested in~\cite[Section~5.1]{nBDCA}, where both algorithms have troubles for finding good solutions due to an abundance of critical points. A different algorithm based on the DC programming approach to solve this problem was introduced in~\cite{BTU16}. This algorithm is also proved to converge to d-stationary~points.

All the codes were written in Python 2.7 and the tests were run on a desktop of Intel Core i7-4770 CPU 3.40GHz with 32GB RAM, under Windows 10 (64-bit). The following strategies have been followed in all the experiments:

\begin{itemize}
\item The trial step size $\overline{\lambda}_k$ in the boosting step of BDCA (line 6 in \Cref{alg:BDCA,alg:BDCA2}) was chosen to be self-adaptive, as in~\cite[Section~5]{nBDCA}, which proceeds as follows:
\begin{enumerate}[label=\arabic*.]
\item Set $\overline{\lambda}_0=0$ and fix any $\gamma>1$.
\item Choose any $\overline{\lambda}_1>0$ and obtain $\lambda_1$ by backtracking.
\item For $k\geq 2$,
\begin{align*}
& \textbf{if }  (\lambda_{k-2}=\overline{\lambda}_{k-2} \textbf{ and } \lambda_{k-1}=\overline{\lambda}_{k-1}) \textbf{ then}\\
& \qquad \text{set } \overline{\lambda}_k:=\gamma\lambda_{k-1}; \\
& \textbf{else }  \text{set }\overline{\lambda}_k:=\lambda_{k-1};
\end{align*}
and obtain $\lambda_k$ by backtracking.
\end{enumerate}

\item In our numerical tests we observed that the accepted step sizes $\mu$ in the DFO step of~\Cref{alg:BDCA2} usually decrease (nearly always).
For this reason,  we used $\eta:=\frac{1}{\beta_2}$ and $\tau:=\varepsilon_2$ in the choice of the initial value of $\mu$ at line 12 in~\Cref{alg:BDCA2}.
By this way, we allow a slight increase in the value of the step size with respect to the previous one, while  we can avoid wasting too much time in this backtracking.

\item We tested the three positive basis presented in~\Cref{ex:basis}. Surprisingly, the basis with equally spaced angles $D_3$ in~\eqref{eq:D3} performs worse than the others in our test problem. In fact, the best choice was the basis $D_1$ in~\eqref{eq:D1}, and this is the one we have employed in all the experiments throughout this section.
\item We used the parameter setting as $\alpha:=0.0001$, $\varepsilon_1:=10^{-8}$, $\varepsilon_2:=10^{-4}$, $\overline{\mu}:=10$, $\gamma=2$, $\overline{\lambda}_1:=10$, $\beta_1:=0.25$ and $\beta_2:=0.5$.
\end{itemize}

\paragraph{The Minimum Sum-of-Squares Clustering Problem:} Given a collection of $n$ points, $\{a^1,a^2,\ldots,a^n\in\R^m\}$, the goal of \emph{clustering} is to group them in $k$ disjoint sets (called \emph{clusters}), $\{A^1,A^2,\ldots,A^k\}$, under an optimal criterion. For each cluster $A_j$, $j=1,2,\ldots,k$, consider its centroid $x^j$ as a representative. The \emph{Minimum Sum-of-Squares Clustering} criterion asks for the configuration that minimizes the sum of squared distances of each point to its closest centroid, i.e. the solution to the optimization problem
\begin{equation}\label{eq:MSOS}
\displaystyle\min_{x^1,\ldots,x^k\in\R^{m}} \left\{\varphi(x^1,\ldots,x^k):= \frac{1}{n}\sum_{i=1}^n\min_{j=1,\ldots,k}\norms{x^j-a^i}^2\right\}.
\end{equation}
We can rewrite the objective in~\eqref{eq:MSOS} as a DC function (see~\cite{BTU16,CYY18,OB15}) with
\begin{align*}
g(x^1,\ldots,x^k)&:=\frac{1}{n}\sum_{i=1}^n\sum_{j=1}^k\norms{x^j-a^i}^2+\frac{\rho}{2}\sum_{j=1}^k\norms{x^j}^2,\\
h(x^1,\ldots,x^k)&:=\frac{1}{n}\sum_{i=1}^n\max_{j=1,\ldots,k}\sum_{t=1,t\neq j}^k\norms{x^t-a^i}^2+\frac{\rho}{2}\sum_{j=1}^k\norms{x^j}^2;
\end{align*}
where $g$ and $h$ satisfy \Cref{as:A1,as:A2,as:A3} for all $\rho>0$ (in our tests, we took $\rho=\frac{1}{nk}$).

\paragraph{Data Set and Experiments:} Our data set is the same one considered in~\cite{nBDCA}, which consists of the location of $4001$ Spanish cities in the peninsula with more than 500 inhabitants\footnote{The data can be retrieved from the Spanish National Center of Geographic Information at \href{http://centrodedescargas.cnig.es}{http://centrodedescargas.cnig.es}.}. In \Cref{fig:map} we compare the iterations generated by DCA and BDCA$+$ for finding a partition into $20$ clusters from the same random starting point $x_0\in\R^{2\times 20}$ (marked with a black cross). We observe that DCA converges to a critical point which is far from being optimal, as there are three clusters without any cities assigned. On the other hand, although BDCA apparently converges to the same critical point, the DFO step allows BDCA$+$ to escape from points which are not d-stationary and reach a better solution.

\begin{figure}[ht!]
\centering
\subfigure[DCA, objective value obtained: $0.4778$]{\includegraphics[width=0.662\textwidth]{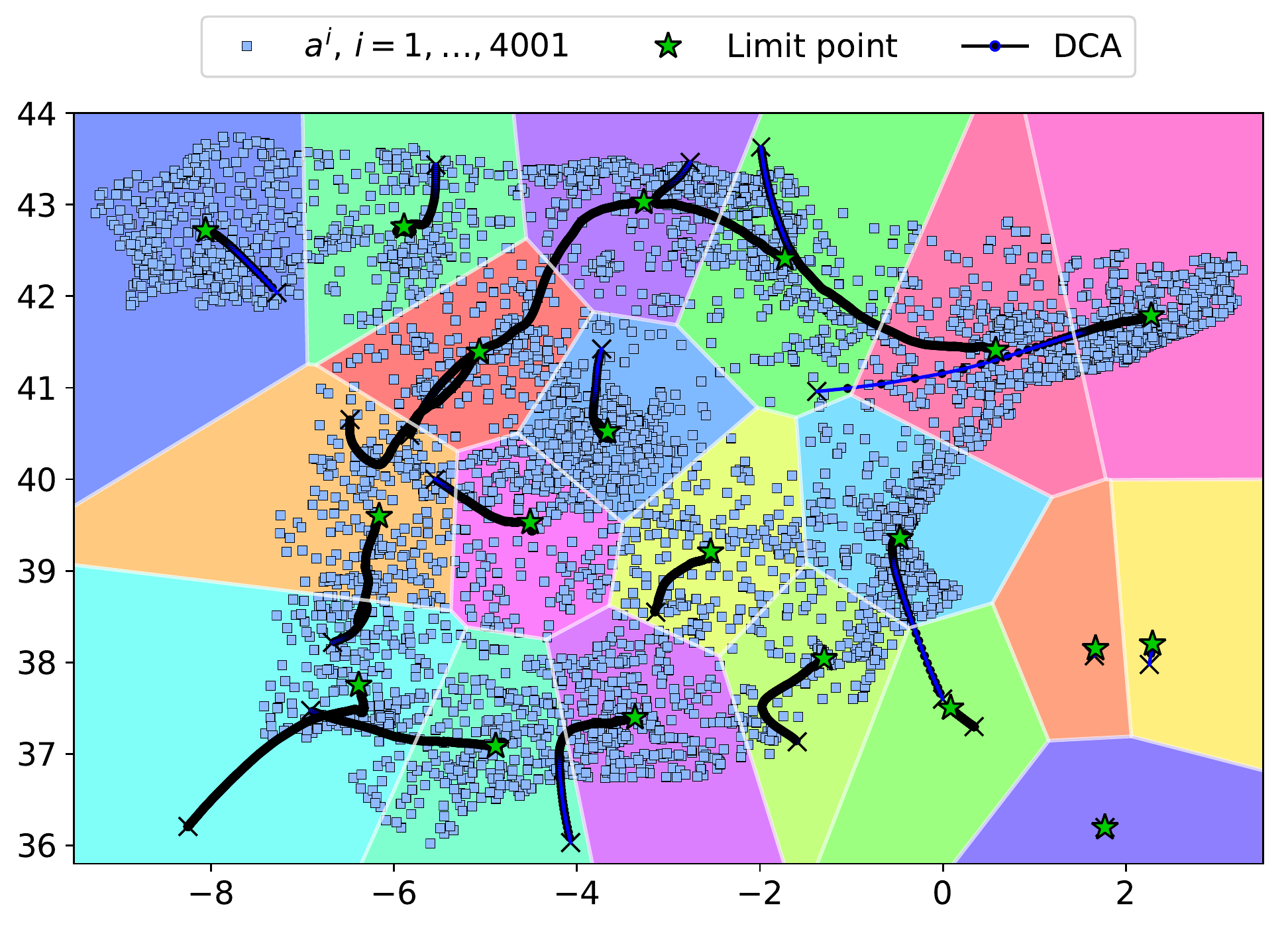}}
\subfigure[BDCA$+$, objective value obtained: $0.4076$]{\includegraphics[width=0.662\textwidth]{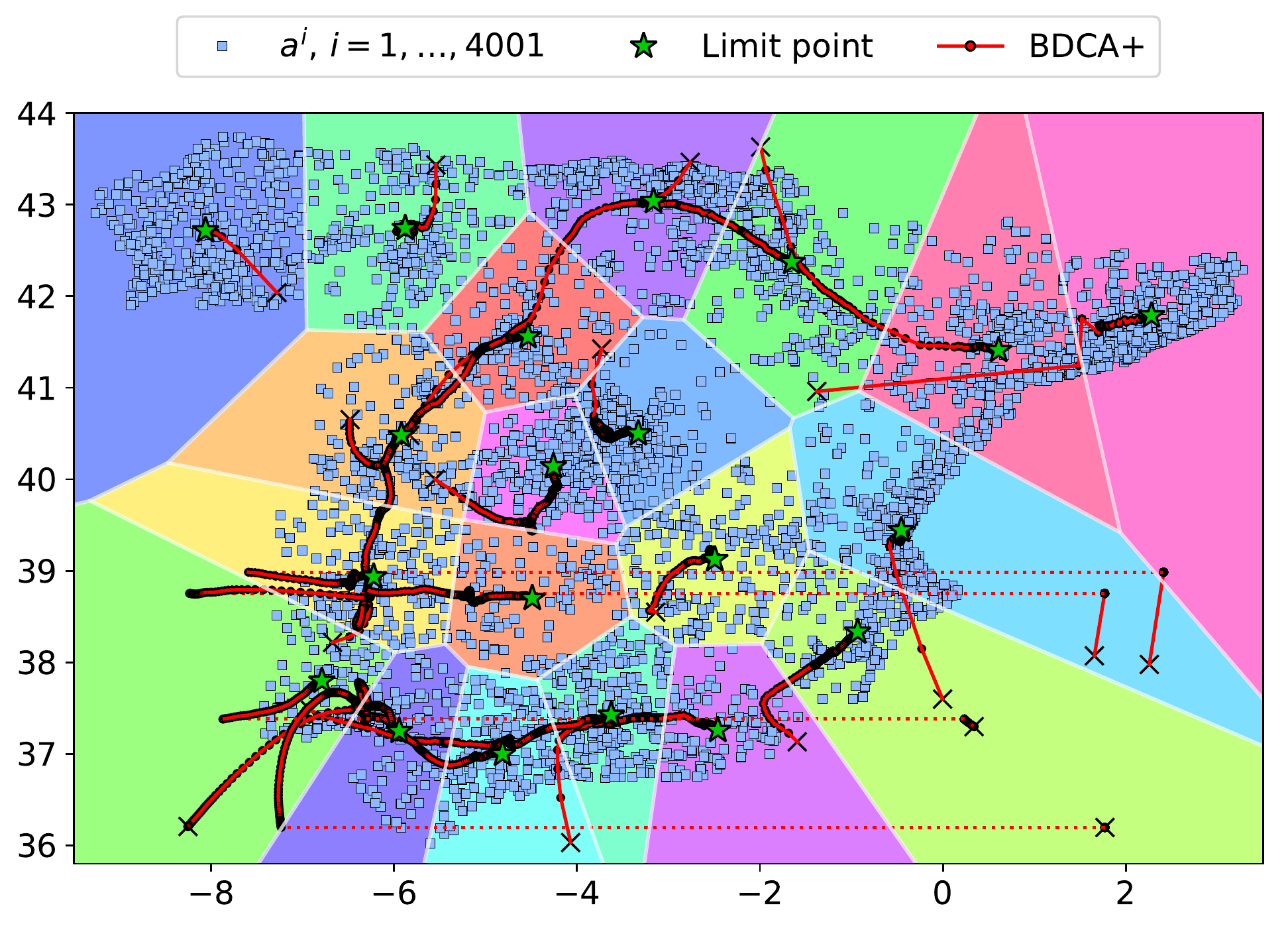}}
\caption{Iterations and limit points generated by DCA and BDCA$+$ for grouping the Spanish cities in the peninsula into 20 clusters from the same random starting point. The DFO step in line 14 of \Cref{alg:BDCA2} was run 10 times (these steps are marked with a dashed line).}\label{fig:map}
\end{figure}


To corroborate these results, we repeated the experiment for different number of clusters $k\in\{20,40,60,80\}$. For each of these values, we run DCA and BDCA$+$ from $50$ random starting points. The results are shown in~\Cref{fig:exp_spain}, where we can clearly observe that BDCA$+$ outperforms DCA, not only in terms of the objective value attained, but even in running time. Observe that it is not really fair to compare the running time of DCA and BDCA$+$, because DCA simply stops at a critical point without incorporating the time-consuming DFO step that guarantees d-stationarity. Nonetheless, the speedup obtained by the line search of BDCA allows BDCA$+$ to still converge faster than DCA in most of the instances. As expected, BDCA$+$ becomes slower as the size of the problem increases, due to the DFO step. Despite that, notice that for $80$ clusters the best solution provided by DCA among the $50$ instances is still worse than the worst solution obtained by BDCA$+$. That is, any of the runs of BDCA$+$ was able to obtain a better solution than $50$ restarts of DCA.

\begin{figure}[ht!]
\subfigure{\includegraphics[width=0.5\textwidth]{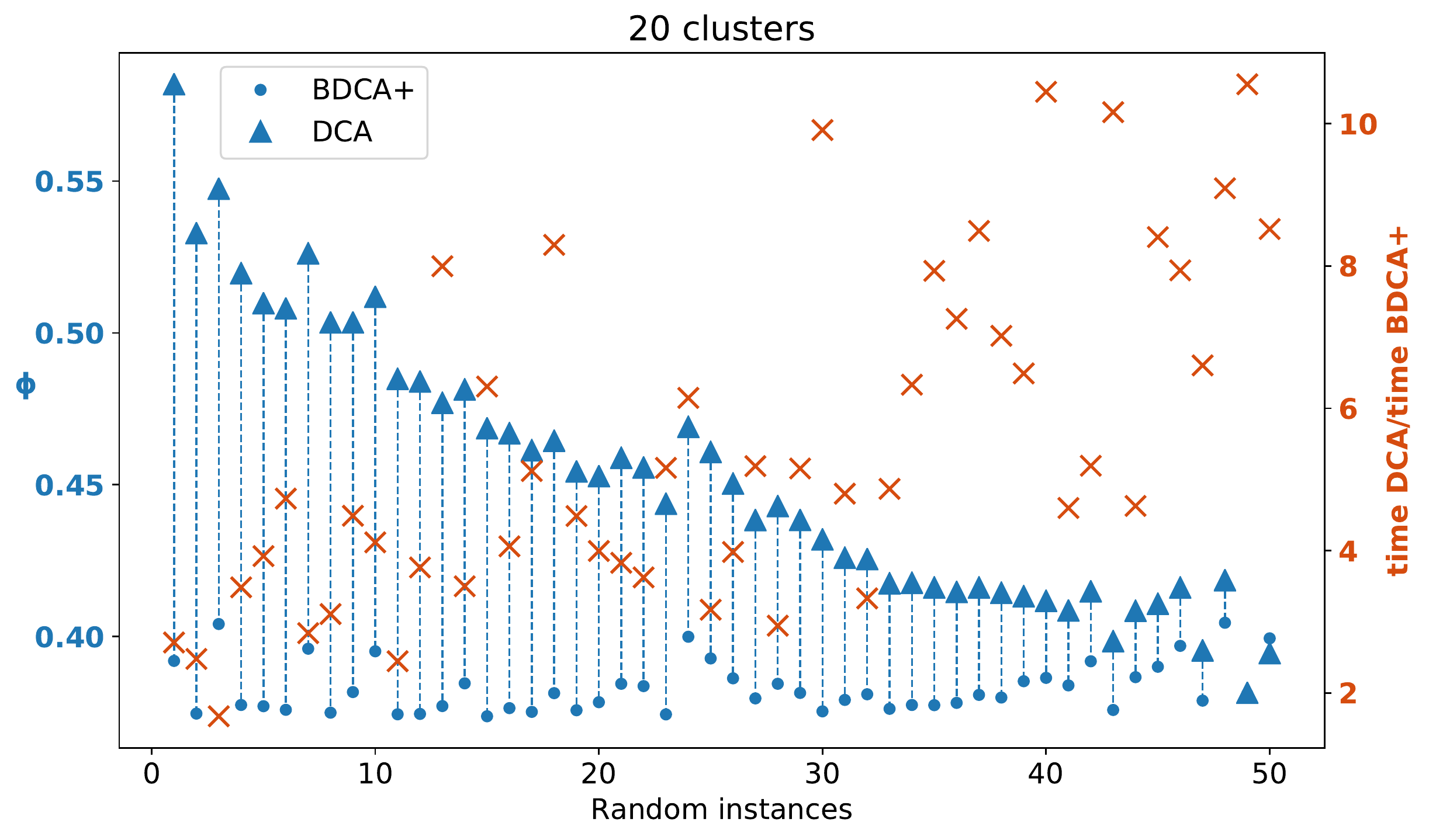}}
\subfigure{\includegraphics[width=0.5\textwidth]{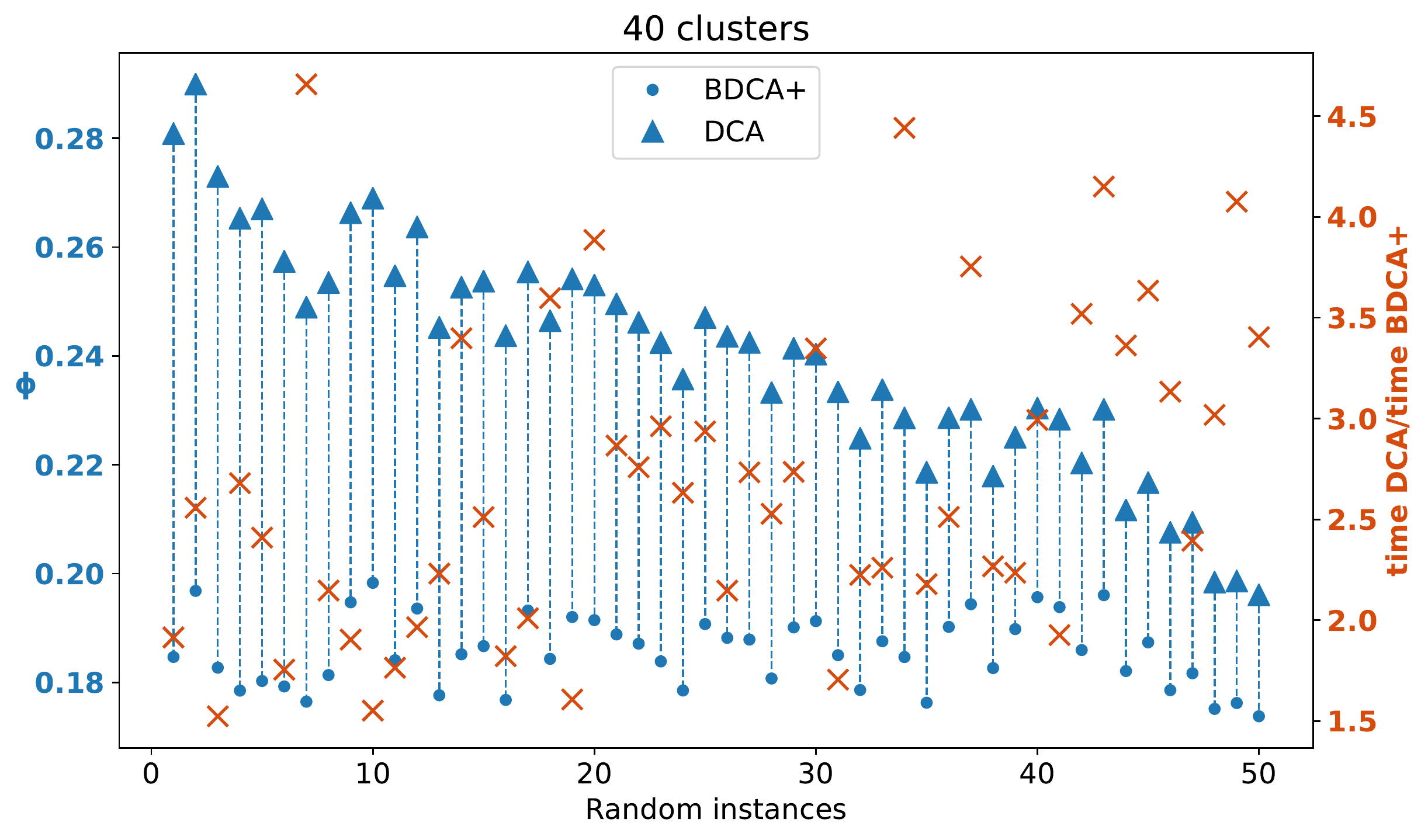}}
\subfigure{\includegraphics[width=0.5\textwidth]{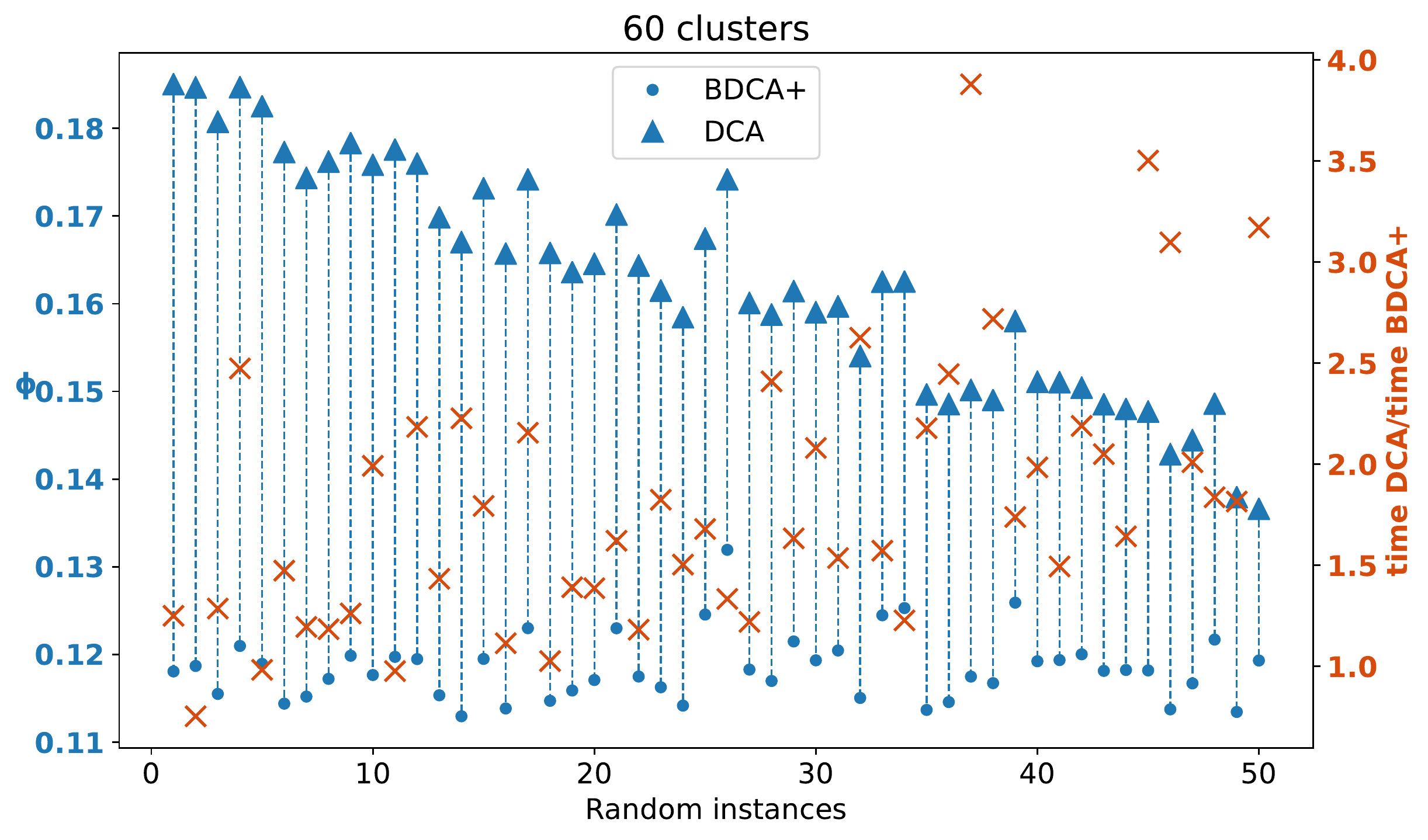}}
\subfigure{\includegraphics[width=0.5\textwidth]{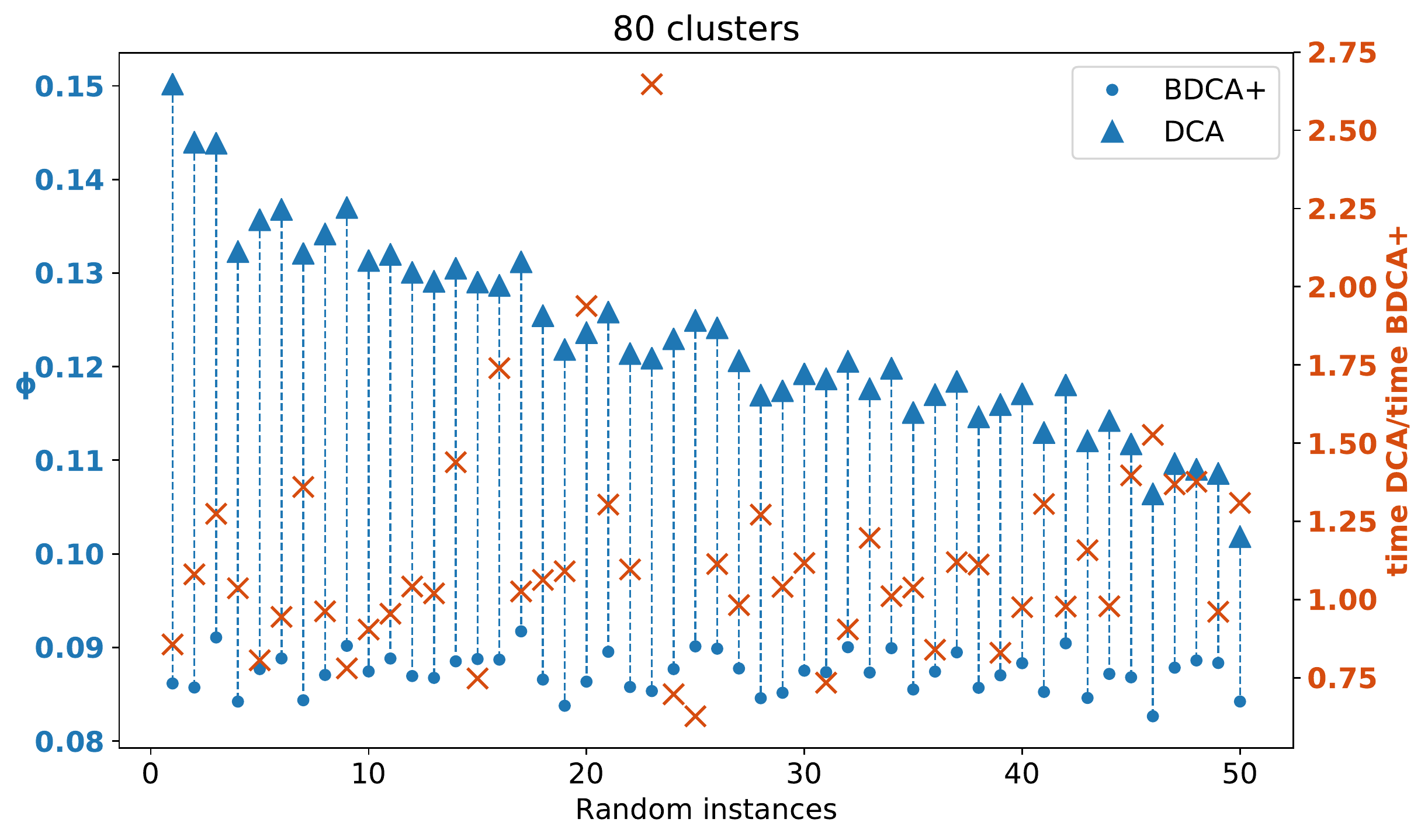}}
\caption{Comparison between the DCA and the BDCA$+$ for classifying the Spanish cities in the peninsula into $k$ clusters for $k\in\{20,40,60,80\}$. For each of these values, both algorithms were run from $50$ random starting points. We represent the objective value achieved in the limit point by each algorithm (left axis, in blue), as well as the ratio between the CPU time required by DCA with respect to the one needed by BDCA$+$ (right axis, orange crosses). Instances were sorted on the x-axis in descending order according to the gap between the objective values at the limit points found by the algorithms.}\label{fig:exp_spain}
\end{figure}



\section{Concluding remarks}\label{sec:concl}
We have proposed a combination between the Boosted DC Algorithm (BDCA) and a simple direct search Derivative-Free Optimization (DFO) technique for minimizing the difference of two convex functions, the first of which is assumed to be smooth. The BDCA is used for minimizing the objective function, while the DFO step permits to force the iteration to converge to d-stationary points (i.e. to points where there exists no descent direction), rather than just critical points.

The good behavior of the new algorithm, called BDCA$+$, has been demonstrated by numerical experiments in a clustering problem. The new scheme generates better solutions than the classical DCA in nearly all the instances tested. Moreover, this improvement in the quality of the solutions has not caused an important loss in the time spent by the algorithm. In fact, BDCA$+$ was faster than DCA in most of the cases, thanks to the large acceleration achieved by the line search boosting step of BDCA.

\paragraph{Acknowledgements}
We thank the referees for their constructive comments which helped us improve the presentation of the paper.
The first author was supported by MINECO of Spain and ERDF of EU, as part of the Ram\'on y Cajal program (RYC-2013-13327) and the grants MTM2014-59179-C2-1-P and PGC2018-097960-B-C22.
The second author was supported by MINECO of Spain and ESF of EU under the program ``Ayudas para contratos predoctorales para la formaci\'on de doctores 2015'' (BES-2015-073360).
The last author was supported by FWF (Austrian Science Fund), project M2499-N32 and Vietnam National Foundation for
Science and Technology Development (NAFOSTED) project 101.01-2019.320.



\begin{thebibliography}{99}
\small


\bibitem{book}
Arag{\'o}n, F.J., Goberna, M.A., L{\' o}pez, M.A., Rodr{\'i}guez, M.M.L.:
\newblock {\em Nonlinear Optimization}, Springer Undergraduate Texts in Mathematics and Technology (2019)

\bibitem{BDCA2018}
Arag{\'o}n~Artacho, F.J., Fleming, R., Vuong, P.T.:
\newblock Accelerating the {DC} algorithm for smooth functions.
\newblock {\em Math. Program.} 169(1), 95--118 (2018)


\bibitem{nBDCA}
Arag{\'o}n~Artacho, F.J., Vuong, P.T.:
\newblock The boosted DC algorithm for nonsmooth functions.
\newblock To appear in {\em SIAM J. Optim.} {\em ArXiv preprint: \href{https://arxiv.org/abs/1812.06070}{1812.06070}} (2019)

%

\bibitem{BTU16}
Bagirov, A.M., Taheri, S., Ugon, J.:
\newblock Nonsmooth DC programming approach to the minimum sum-of-squares clustering problems.
\newblock {\em Pattern Recogn.} 53, 12--24 (2016)

\bibitem{GGMB18}
Gaudioso, M., Giallombardo, G., Miglionico, G., Bagirov, A.M.:
\newblock Minimizing nonsmooth DC functions via successive DC piecewise-affine approximations.
\newblock {\em J. Glob. Optim.} 71(1), 37--55 (2018)

\bibitem{BH18}
Beck, A., Hallak, N.:
\newblock On the convergence to stationary points of deterministic and randomized feasible descent directions methods.
\newblock {\em SIAM J. Optim.} 30(1), 56--79 (2020)

\bibitem{DFO}
Conn, A.R., Scheinberg, K., Vicente, L.N.:
\newblock {\em Introduction to derivative-free optimization}, MPS-SIAM Series on Optimization vol. 8 (2009)

\bibitem{CYY18}
Cuong, T.H., Yao, J.C., Yen, N.D.:
\newblock Qualitative properties of the Minimum Sum-of-Squares Clustering problem.
\newblock {\em ArXiv preprint: \href{https://arxiv.org/abs/1810.02057}{1810.02057}} (2018)


\bibitem{An2018}
Le~Thi, H.A., Pham Dinh, T.:
\newblock DC Programming and DCA: Thirty years of developments.
\newblock {\em Math. Program.} 169(1), 5--68 (2018)

\bibitem{tao2005dc}
Le~Thi, H.A., Pham~Dinh, T.:
\newblock The DC (difference of convex functions) programming and DCA revisited with DC models of real world nonconvex optimization problems.
\newblock {\em Ann. Oper. Res.} 133(1-4), 23--46 (2005)



\bibitem{OB15}
Ordin, B., Bagirov, A.M.:
\newblock A heuristic algorithm for solving the minimum sum-of-squares clustering problems. \newblock {\em J. Glob. Optim.} 61, 341--361 (2015)

\bibitem{WelingtonJogo}
de Oliveira, W.:
\newblock Proximal bundle methods for nonsmooth DC programming. \newblock {\em J. Glob. Optim.} 75, 523--563 (2019)

\bibitem{Welington}
de Oliveira, W., Tcheou, M.P.:
\newblock An inertial algorithm for DC programming. \newblock {\em Set-Valued Var. Anal.} 27, 895--919(2019)

\bibitem{TT97}
Pham Dinh, T., Le~Thi, H.A.:
\newblock Convex analysis approach to DC programming: theory, algorithms and applications. \newblock {\em Acta Mathematica Vietnamica}, 22, 289--355 (1997)

\bibitem{DC86}
Pham Dinh T., Souad, E.B.:
\newblock Algorithms for solving a class of nonconvex optimization problems. Methods of subgradients. In J.-B. Hiriart-Urruty, editor, {\em FERMAT Days 85: Mathematics for Optimization, volume 129 of North-Holland Mathematics Studies}, pp. 249--271. Elsevier (1986)


\bibitem{Toland79}
Toland, J.F.:
\newblock On subdifferential calculus and duality in non-convex optimization.\newblock {\em Bull. Soc. Math. Fr. M\'em.} 60 (Proc. Colloq., Pau 1977), 177--183 (1979)

\bibitem{RT} Rockafellar, R.T.:
\newblock{\em Convex Analysis}, Princeton University Press (1972)



\end{thebibliography}
\end{document}